\newtheorem{define}{Definition}[section]
\newtheorem{lem}[define]{Lemma}
\newtheorem{thm}[define]{Theorem}
\newtheorem{cor}[define]{Corollary}
\newtheorem{alg}[define]{Algorithm}
\begin{document}
\title{\bf An algorithm for finding Hamiltonian Cycles in Cubic Planar Graphs}
\date{}
\author{\bf Bohao Yao \\ \bf Charl Ras, Hamid Mokhtar \\ \bf The University of Melbourne}
\maketitle

\begin{abstract}
We first prove a one-to-one correspondence between finding Hamiltonian cycles in a cubic planar graphs and finding trees with specific properties in dual graphs. Using this information, we construct an exact algorithm for finding Hamiltonian cycles in cubic planar graphs. The worst case time complexity of our algorithm is O$(2^n)$.
\end{abstract}
	
\section{Introduction}
A {\it Hamiltonian cycle} is a cycle which passes through every vertex in a graph exactly once. A {\it planar graph} is a graph which can be drawn in the plane such that no edges intersect one another. A {\it cubic graph} is a graph in which all vertices have degree 3. Finding a Hamiltonian cycle in a cubic planar graph is proven to be an $\mathcal{NP}$-Complete problem \cite{garey1976planar}. This implies that unless $\mathcal{P}=\mathcal{NP}$, we could not find an efficient algorithm for this problem.
Most approaches to finding a Hamiltonian cycle in planar graph utilises the {\it divide-and-conquer} method, or its derivation, the {\it separator theorem} which partitions the graph in polynomial time \cite{lipton1979separator}. Exact algorithms using such methods were found to have the complexity of O$(c^{\sqrt{n}})$ \cite{klinz2006exact} \cite{dorn2005efficient}, where {\it n} denotes the number of vertices and {\it c} is a constant. 
In this paper, we consider only cubic planar graphs and attempt to find a new algorithm to provide researchers with a new method to approaching this problem.

\section{The Expansion Algorithm}
We first start by introducing our so-called {\it Expansion Algorithm} which increases the number of vertices in a cycle at each iteration. A cycle can be first found by taking the outer facial cycle of the planar graph. We define it as the {\it base cycle}, $\sigma_0$. This base cycle is then expanded by the Expansion Algorithm, which will be described in detail later.

\begin{define}
\label{def1}
Consider a planar graph $G=(V,E)$

A \emph{complementary path}, $P_e^\sigma$, is a path between 2 adjacent vertices, $v_1, v_2 \in \sigma$ connected by the edge, $e$, s.t. $P_e^\sigma$ is internally disjoint from $\sigma$.

Furthermore, $P_e^\sigma$ and $e$ together will form the boundary of a face in $G$.
\end{define}

Assuming we are not dealing with multigraphs, the complementary path will always have at least one other vertex besides $v_1, v_2$.

The restriction that $P_e^\sigma$ and $e$ have to form the boundary of a face will be used later to prove Corollary \ref{cor1}.

\begin{define}
\label{def2}
Let $G_1=(V_1,E_1)$ and $G_2=(V_2,E_2)$. Then, $G_1+G_2 := (V_1 \cup V_2,E_1 \cup E_2)$
\end{define}

\begin{alg}[Expansion Algorithm]
\label{alg1}
\end{alg}

\begin{algorithm}
{\bf Let} $\sigma_0$ be the outer facial cycle. \\
{\bf Let} $i=0$\;
  \eIf{$\exists P_{e_i}^{\sigma_i}$}{
   $\sigma_{i+1} := \sigma_i + P_{e_i}^{\sigma_i} - e_i$\;
   $i=i+1$\;
   }{
   {\bf Output} $\sigma_i$\
  }
\end{algorithm}

At each iteration, the algorithm removes an edge, $e$ and adds a path $P_e^\sigma$. Since there are no vertices on $e$ and there is at least 1 vertex on $P_e^\sigma$, the number of vertices on the cycle will always increase at each iteration. Since there is only a finite amount of vertices in the graph, the algorithm will have to terminate eventually.

\begin{figure}[H]
\centering
\includegraphics{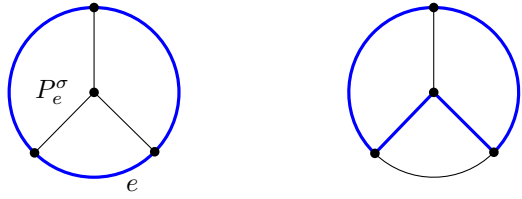}
\caption{Example of utilizing the Expansion Algorithm with the base cycle in blue}
\end{figure}

\begin{define}
The \emph{interior} of a cycle, $C$, is the connected region lying to the left of an anticlockwise orientation of $C$.
\end{define}

\begin{lem}
\label{lem2}
At each iteration of the Algorithm \ref{alg1}, all the vertices of G either lie in the interior of $\sigma$, or on $\sigma$.
\end{lem}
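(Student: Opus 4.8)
The statement is that at every iteration, all vertices of $G$ lie either on the cycle $\sigma_i$ or in its interior. The natural approach is induction on $i$.

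For the base case $i=0$: $\sigma_0$ is the outer facial cycle of the planar graph $G$. Since $G$ is drawn in the plane with $\sigma_0$ as the boundary of the unbounded (outer) face, every vertex of $G$ lies on or inside this outer cycle by definition of a planar embedding — nothing lies outside the outer face except... nothing. So the base case is essentially immediate from the choice of embedding.

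For the inductive step: assume all vertices lie on or inside $\sigma_i$. The algorithm sets $\sigma_{i+1} := \sigma_i + P_{e_i}^{\sigma_i} - e_i$. By Definition \ref{def1}, the complementary path $P_{e_i}^{\sigma_i}$ is internally disjoint from $\sigma_i$ and, together with $e_i$, bounds a face $F$ of $G$. The key geometric observation is that this face $F$ lies in the interior of $\sigma_i$ (since $e_i \in \sigma_i$ and $F$ is on the interior side of that edge — one must check $F$ is not the outer face, which follows because $P_{e_i}^{\sigma_i}$ is internally disjoint from $\sigma_i$ and hence drawn inside it). Replacing the edge $e_i$ by the path $P_{e_i}^{\sigma_i}$ therefore "pushes" the boundary inward across $F$: the new cycle $\sigma_{i+1}$ still encloses everything $\sigma_i$ enclosed, except that the open region $F$ is now exterior to $\sigma_{i+1}$ along its boundary — but $F$ being a face contains no vertices of $G$ in its interior. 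Hence every vertex of $G$ that was on or inside $\sigma_i$ is still on or inside $\sigma_{i+1}$: vertices strictly inside $\sigma_i$ but not in $\overline{F}$ remain strictly inside; the new vertices of $P_{e_i}^{\sigma_i}$ are now on $\sigma_{i+1}$; and vertices that were on $\sigma_i$ remain on $\sigma_{i+1}$ (the only vertices of $\sigma_i$ removed are internal points of $e_i$, of which there are none).

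The main obstacle I anticipate is making the phrase "$F$ lies in the interior of $\sigma_i$" fully rigorous — this is a Jordan-curve-type statement about planar embeddings, and one needs to argue carefully that the face bounded by $e_i$ and $P_{e_i}^{\sigma_i}$ is on the interior side of $\sigma_i$ rather than the exterior side. I would handle this by invoking the inductive hypothesis (everything is inside $\sigma_i$, so in particular the path $P_{e_i}^{\sigma_i}$, being internally disjoint from $\sigma_i$ and consisting of vertices of $G$, lies in the closed interior), together with the Jordan curve theorem applied to $\sigma_i$ and then to $\sigma_{i+1}$. A clean way to phrase the conclusion: $\sigma_{i+1}$ and $\sigma_i$ together bound the region $F$, and $\operatorname{int}(\sigma_{i+1}) = \operatorname{int}(\sigma_i) \setminus \overline{F}$, so since $F$ (a face) contains no vertices, no vertex is lost. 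I would keep the topological details light and lean on the planarity and the face-boundary condition from Definition \ref{def1}, which is presumably exactly why that condition was imposed.
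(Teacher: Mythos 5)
Your proposal is correct and follows essentially the same route as the paper's own proof: induction on the iteration index, with the key observation that the face bounded by $e_i$ and $P_{e_i}^{\sigma_i}$ lies in the interior of $\sigma_i$ and contains no vertices, so excising it from the interior when passing to $\sigma_{i+1}$ loses nothing. Your version is somewhat more careful than the paper's, since you explicitly flag and justify the step that this face sits on the interior (rather than exterior) side of $\sigma_i$, which the paper merely asserts.
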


\begin{proof}
Let $P_i$ be the statement that all the vertices of G either lie in the interior of $\sigma_i$
\begin{itemize}
	\item $P_0$ is true as $\sigma_0$ is the outer facial cycle, with all the vertices either lie in the interior or on $\sigma_0$.
	\item Assume $P_k$ true. Then all the vertices either lie in the interior or on $\sigma_k$. Assume $\exists P_{e_k}^{\sigma_k}$, then $\sigma_{k+1}$ exists. Let $f_k$ be the face bounded by $P_{e_k}^{\sigma_k}$ and $e_k$. The interior of $f_k$ originally lies inside $\sigma_k$. But by an iteration of Algorithm \ref{alg1}, the interior of $f_k$ now lies outside the new base cycle, $\sigma_{k+1}$. However, since there are no vertices in the interior of a face, all the vertices in $\sigma_{k+1}$ also lies either in the interior or on $\sigma_{k+1}$. Therefore, if $\sigma_{k+1}$ exists, then $P_k \Rightarrow P_{k+1}$.
\end{itemize}
Hence, by mathematical induction, $P_i$ is true $\forall i$ as long as $\sigma_i$ exists.
\end{proof}
\begin{cor}
\label{cor1}
If $P_e^\sigma$ exists for an edge $e \in \sigma$, then $P_e^\sigma$ is unique.
\end{cor}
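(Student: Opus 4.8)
The plan is to exploit planarity directly: in a plane graph, every edge that lies on some cycle is incident to exactly two distinct faces, one on each side of that cycle. Since $e \in \sigma$ and $\sigma$ is a cycle, $e$ is not a bridge, so $e$ borders two distinct faces of $G$; viewing $\sigma$ as a Jordan curve, one of these faces, call it $f_{\mathrm{in}}$, lies on the interior side of $\sigma$ along $e$, and the other, $f_{\mathrm{out}}$, on the exterior side.

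The next step is to argue that any complementary path for $e$ must, together with $e$, bound precisely $f_{\mathrm{in}}$. By Definition~\ref{def1}, if $P_e^\sigma$ exists then $P_e^\sigma$ and $e$ together form the boundary of some face $f$. Because $P_e^\sigma$ is internally disjoint from $\sigma$, none of its internal vertices lie on $\sigma$, and by Lemma~\ref{lem2} every vertex of $G$ lies on or in the interior of $\sigma$; hence the internal vertices of $P_e^\sigma$ lie strictly in the interior of $\sigma$. Consequently $P_e^\sigma$, apart from its two endpoints $v_1,v_2 \in \sigma$, lies in the interior region of $\sigma$, so the face $f$ it bounds with $e$ sits on the interior side of $e$, which forces $f = f_{\mathrm{in}}$.

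Uniqueness then follows immediately: if $P$ and $P'$ were both complementary paths for the same edge $e$, the previous step shows that $P$ together with $e$, and $P'$ together with $e$, are each the boundary of $f_{\mathrm{in}}$. Since a face of a plane graph determines its boundary subgraph uniquely, these boundary subgraphs coincide, and removing $e$ from both gives $P = P'$.

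The main obstacle is making the ``interior side'' reasoning rigorous, i.e.\ showing that a face incident to $e$ is contained in the closure of exactly one of the two regions into which the Jordan curve $\sigma$ divides the plane, so that ``the face on the interior side of $e$'' is genuinely well defined. This rests on the fact that a face is a connected open region disjoint from every edge of $G$, in particular from the edges of $\sigma$, so it cannot meet both the interior and the exterior of $\sigma$; together with the observation that $e$ is not a bridge (as it lies on the cycle $\sigma$), this pins down $f_{\mathrm{in}}$ uniquely and closes the argument.
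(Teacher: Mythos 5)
Your proposal is correct and takes essentially the same route as the paper: both use Lemma~\ref{lem2} to place the complementary path in the interior of $\sigma$, then observe that $e$ borders exactly two faces, only one of which lies on the interior side, so the face (and hence the path) is determined. You simply spell out the final step (that a face determines its boundary, so $P = P'$ after deleting $e$) more explicitly than the paper does.
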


\begin{proof}
By Lemma \ref{lem2}, none of the vertices can lie outside $\sigma$, thus, $P_e^\sigma$ also lies in the interior of $\sigma$. Any edge $e$ lies on the boundary of 2 faces.  If $e \in \sigma$, then $\exists$ only 1 possible $P_e^\sigma$ such that $e$ and $P_e$ forms the boundary of a face that lies in the interior of $\sigma$ (the other face that $e$ is a boundary of lies outside $\sigma$)
\end{proof}

\begin{lem}
\label{lem1}
If $G$ have a Hamiltonian cycle, then $\exists$ a choice of complementary paths that algorithm \ref{alg1} can use to find that Hamiltonian cycle.
\end{lem}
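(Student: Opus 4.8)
The plan is to fix a Hamiltonian cycle $H$ of $G$ and to read off, from the way $H$ sits in the plane, an order in which faces can be ``flipped out'' so that Algorithm \ref{alg1}, with those choices, walks from $\sigma_0$ to $H$.

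Since $G$ is planar, $H$ is a simple closed curve; let $\mathcal H$ be the closed region it bounds that does not contain the outer face, and let $D$ be the other closed region, so that the outer face $O$ lies in $D$ and $\partial D=H$. The key structural observation is that, because $H$ passes through \emph{every} vertex of $G$, no vertex of $G$ lies in the open region $D\setminus H$ (nor in $\mathcal H\setminus H$). Hence $D$ is a topological disc all of whose vertices lie on its boundary cycle $H$, dissected into cells by the edges of $G$ interior to it (call these the \emph{diagonals}); and the cells of this dissection are precisely $O$ together with the faces of $G$ that lie outside $H$. In any dissection of a polygon the number of cells exceeds the number of diagonals by exactly one, so the dual graph of this dissection --- one vertex per cell, one edge per diagonal, and connected (the interior of $D$ is connected) --- is a tree. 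Call it $\mathcal T$ and root it at the cell $O$.

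Now I would run Algorithm \ref{alg1} as follows: list the cells of $D$ as $O=c_0,c_1,\dots,c_m$ in any order in which each cell occurs after its parent in $\mathcal T$ (a breadth- or depth-first enumeration from the root), and at the $i$-th step ($0\le i<m$) take $e_i$ to be the unique diagonal joining $c_{i+1}$ to its parent. Writing $R_i:=c_0\cup\dots\cup c_i$, the statement to prove by induction on $i$ is that $\sigma_i=\partial R_i$, that $R_i$ is a disc containing $O$, and that the $i$-th step is legitimate. For the base case, $\sigma_0=\partial O$ is the outer facial cycle as required. For the inductive step, $e_i$ separates $c_{i+1}\notin R_i$ from its parent $\in R_i$, so $e_i$ is an edge of $\sigma_i=\partial R_i$; by Corollary \ref{cor1} the only candidate for $P_{e_i}^{\sigma_i}$ is the rest of $\partial c_{i+1}$, and one checks this is a genuine complementary path in the sense of Definition \ref{def1}: the other edges of $\partial c_{i+1}$ are diagonals to cells not yet in $R_i$, or edges of $H$ whose far side lies in $\mathcal H$, hence none lie on $\partial R_i$, so $\partial c_{i+1}$ meets $\sigma_i$ in the single edge $e_i$; and since $G$ is cubic it also meets $\sigma_i$ in no further vertex (a vertex shared by $\sigma_i$ and $\partial c_{i+1}$ away from $e_i$ would be forced to have degree at least $4$). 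Thus $\partial c_{i+1}\setminus\{e_i\}$ is internally disjoint from $\sigma_i$ and closes up the face $c_{i+1}$ with $e_i$, so $\sigma_{i+1}=\sigma_i+P_{e_i}^{\sigma_i}-e_i=\partial R_{i+1}$, again a disc containing $O$.

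Carrying this through, after $m$ steps $R_m$ is all of $D$, whence $\sigma_m=\partial D=H$; and the run never reaches the ``no complementary path'' branch of Algorithm \ref{alg1} before that, since each $e_i$ admits the path $P_{e_i}^{\sigma_i}$ exhibited above, so a valid sequence of choices recovers $H$. The conceptual core of the argument --- and the point I expect to need the most care --- is the opening observation: it is exactly the Hamiltonicity of $H$ that makes both regions bounded by $H$ vertex-free in their interiors, turning $D$ into a polygonal dissection and handing over the dual tree $\mathcal T$ for free. The remaining work is the bookkeeping that each chosen diagonal really gives a legitimate complementary path (internal disjointness from $\sigma_i$), and this is where the cubic hypothesis is genuinely used and where Lemma \ref{lem2} and Corollary \ref{cor1} keep the picture under control.
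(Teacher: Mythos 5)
Your proof is correct, but it takes a genuinely different route from the paper's. The paper argues locally: it labels the vertices $v_1,\dots,v_n$ of $\sigma_0$ in cyclic order, lets $P_i$ be the arc of the Hamiltonian cycle $C$ joining $v_i$ to $v_{i+1}$, and repeatedly expands across $e_i$ (and then across whatever still separates the current cycle from $P_i$) until that arc of $\sigma$ coincides with $P_i$, then moves on to the next $i$; no global structure is extracted from $C$ up front. You instead read off the whole certificate at once: the faces of $G$ exterior to $H$ dissect the disc $D$ into cells whose dual is a tree rooted at the outer face, and a root-to-leaf enumeration of that tree dictates the sequence of expansions. This is essentially the tree $\overline{T}$ of Lemma \ref{lem3} and Theorem \ref{thm1}, built directly from $H$ rather than recovered from a run of the algorithm, so your argument anticipates the paper's later machinery. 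What your version buys is rigor that the paper's sketch lacks: the invariant $\sigma_i=\partial R_i$, the verification that each candidate really is a complementary path (only $e_i$ of $\partial c_{i+1}$ lies on $\sigma_i$, and internal vertex-disjointness follows from the cubic hypothesis), and a clear measure of progress; the paper's proof never checks that its intermediate expansions are legitimate or that its iteration is well-founded. The one point you leave implicit is that the run actually halts at $\sigma_m=H$ rather than overshooting, but this is immediate: a further complementary path would require an internal vertex off $H$, and $H$ is Hamiltonian, so no such vertex exists.
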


\begin{proof}
Consider a Hamiltonian cycle, $C$, in $G$. If $C = \sigma_0$ then the case is trivial. If $C \ne \sigma_0$, then by Lemma \ref{lem2}, all vertices in $C$ lies either on or in the interior of $\sigma_0$ since $C$ contains all the vertices of the graph. Since $C$ contains all the vertices and $\sigma_0$ is a cycle and therefore must contain at least 3 vertices, $C \cap \sigma_0 \ne \emptyset$.

Suppose $C \ne \sigma_0$. Let $v_1,v_2,...,v_n$ be consecutive vertices on $\sigma_0$ about the clockwise rotation. Let $P_i$ be the path connecting $v_i,v_{i+1}$ that is the subpath of $C$. Let the edge between $v_i$ and $v_{i+1}$ be $e_i$ which also lies on $\sigma_0$. Since $C$ contains all the vertices in the graph, by iteratively finding $P_e^\sigma$, starting with $e = e_i$, more subpath of $P_i$ will lie on $\sigma$ at each iteration. The algorithm will only move on when $P_i \subset \sigma$.

Repeating this process $\forall i$, we will eventually end up with $\sigma = C$, in which the algorithm will terminate.
\end{proof}
\begin{figure}
\centering
\includegraphics{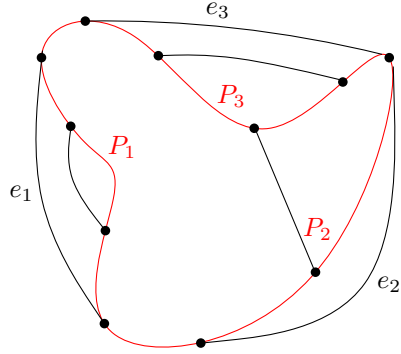}
\caption{Illustration of the proof for Lemma \ref{lem1} with $C$ in red}
\end{figure}

\section{The Problem in the Dual Graph}

\begin{define}
Given a cubic planar graph, $G$, and the corresponding dual graph, $\overline{G}=(\overline{V},\overline{E})$. A \emph{corresponding face}, $f_{\overline{v}} \in G$, is the face corresponding to the vertex $\overline{v} \in \overline{G}$.
\end{define}

\begin{define}
The \emph{outer vertex}, $\overline{v}^*$, is the vertex in $\overline{G}$ that corresponds to the outer face of $G$.
\end{define}

\begin{define}
Let $e \in G$ be the shared boundary between $f_{\overline{v}_1},f_{\overline{v}_2}$. A \emph{dual edge}, $\overline{e} \in \overline{G}$, is defined as an edge between $\overline{v}_1, \overline{v}_2 \in \overline{G}$.
\end{define}
\begin{figure}[H]
	\centering
	\includegraphics{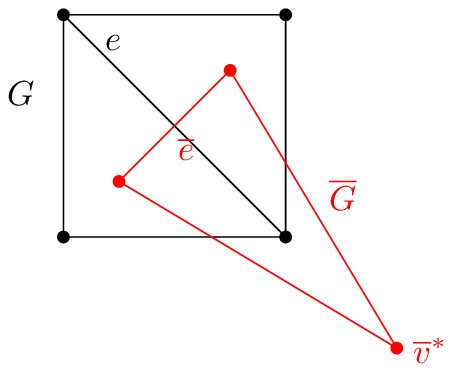}
	\caption{Example of $\overline{e}$ and $\overline{v}^*$}
\end{figure}

Consider Algorithm \ref{alg1}. Since $P_e^\sigma$ is unique for an edge $e$ by Corollary \ref{cor1}, let $e_0,e_1,...,e_n$ be the edges chosen in the {Expansion Algorithm} in order. Let $\overline{e}_0,\overline{e}_1,...,\overline{e}_n$ be the corresponding dual edges in $\overline{G}$.

\begin{figure}[H]
\centering
\includegraphics{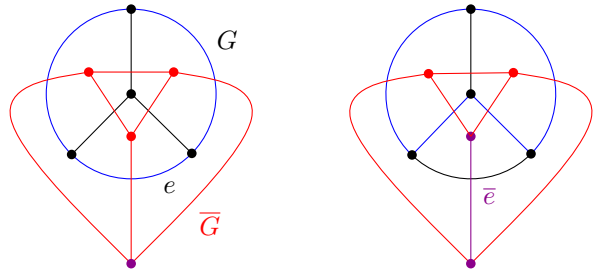}
\caption{Expansion Algorithm in the Dual Graph with $\overline{T}$ in Purple}
\end{figure}

\begin{lem}
\label{lem3}
$\bigcup\limits_{i=0}^n \overline{e}_i$ is a tree ($\overline{T}$).
\end{lem}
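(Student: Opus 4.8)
The plan is an induction on the number of dual edges added, showing that each partial union $\overline{T}_i := \bigcup_{j=0}^{i}\overline{e}_j$ is a tree, and in fact that it is grown one leaf at a time. First I would fix the bookkeeping: for each $j$ let $f_j$ be the face of $G$ bounded by $e_j$ and $P_{e_j}^{\sigma_j}$ (the face guaranteed by Definition \ref{def1}), and let $g_j$ be the \emph{other} face of $G$ incident with $e_j$; by definition of a dual edge, $\overline{e}_j$ joins the dual vertices $\overline{v}_{f_j}$ and $\overline{v}_{g_j}$ in $\overline{G}$.

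The structural claim I would establish, which is the heart of the argument, is: for every $i$, the region exterior to $\sigma_i$ is exactly the union of (the closures of) the outer face of $G$ together with $f_0,f_1,\dots,f_{i-1}$; consequently these $i+1$ faces are pairwise distinct, $f_i$ is \emph{not} among them, and $g_i$ \emph{is} among them. This is a strengthening of Lemma \ref{lem2}, proved in the same style by induction on $j$: passing from $\sigma_j$ to $\sigma_{j+1}=\sigma_j+P_{e_j}^{\sigma_j}-e_j$ replaces the edge $e_j$ on the cycle by a path that runs entirely through the interior of $\sigma_j$ (it runs through the interior because its internal vertices lie strictly inside $\sigma_j$ by Lemma \ref{lem2}, and it is the unique such path by Corollary \ref{cor1}), and this operation carves exactly the face $f_j$ off the interior and attaches it to the exterior. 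Since $f_j$ lay in the interior of $\sigma_j$, it cannot equal any previously carved-off face nor the outer face; and since $e_j\in\sigma_j$, the side of $e_j$ exterior to $\sigma_j$ is adjacent to a face of $G$ lying in the already-carved-off region, so $g_j$ is one of the earlier faces (for $j=0$ it is the outer face). Making the phrase ``carves off exactly $f_j$'' topologically precise is the delicate point, and I expect it to be the main obstacle, but it follows the template of the proof of Lemma \ref{lem2}.

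Granting the claim, the induction on $i$ is routine. Before any iteration we have the trivial tree on the single vertex $\overline{v}^*$; since $g_0$ is the outer face, $\overline{e}_0$ is incident with $\overline{v}^*$. Now assume $\overline{T}_{i-1}$ is a tree with vertex set $\{\overline{v}^*,\overline{v}_{f_0},\dots,\overline{v}_{f_{i-1}}\}$. By the claim, $\overline{e}_i=\overline{v}_{f_i}\overline{v}_{g_i}$ has one endpoint $\overline{v}_{g_i}\in V(\overline{T}_{i-1})$ and the other endpoint $\overline{v}_{f_i}\notin V(\overline{T}_{i-1})$. Adjoining to a tree a new vertex together with a single edge to an existing vertex again yields a tree, so $\overline{T}_i=\overline{T}_{i-1}+\overline{e}_i$ is a tree on $\{\overline{v}^*,\overline{v}_{f_0},\dots,\overline{v}_{f_i}\}$. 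Taking $i=n$ gives the lemma.

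As a consistency check, this also records that $|E(\overline{T})|=n+1=|V(\overline{T})|-1$ and that $\overline{T}$ has no loops, in agreement with the fact that each $e_i$ lies on the cycle $\sigma_i$ and hence is not a bridge of $G$, so $f_i\neq g_i$; neither cubicity nor the Hamiltonicity of $G$ is needed, only planarity and the correctness of Algorithm \ref{alg1}.
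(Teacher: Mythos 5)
Your proposal is correct and follows essentially the same route as the paper's proof: an induction showing that each $\overline{e}_i$ joins a dual vertex whose face has already been carved into the exterior of $\sigma_i$ (hence already on the tree) to a dual vertex whose face still lies in the interior (hence new), so the tree grows by one leaf per iteration. Your version is more careful than the paper's — in particular your structural claim identifying the exterior of $\sigma_i$ with the outer face plus $f_0,\dots,f_{i-1}$ makes explicit the distinctness of the carved-off faces, which the paper only asserts informally — but the underlying argument is the same.
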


\begin{proof}
Let $P_a$ be the statement that $\bigcup\limits_{i=0}^a \overline{e}_i$ is a tree.
\begin{itemize}
\item $P_0$ is true as 2 vertices connected by an edge, $\overline{e}_0$ is considered a tree.
\item Assume $P_k$ is true. Then $\bigcup\limits_{i=0}^k \overline{e}_i$ is a tree. From proof of Corollary \ref{cor1}, $e_{k+1}$ is a boundary of 2 faces, one that lies inside $\sigma_{k+1}$ ($f_1$) and another that lies outside $\sigma_{k+1}$ ($f_2$). From proof of Lemma \ref{lem2}, by including path $P_{e_i}^{\sigma_i}$, the face bounded by $e_i$ and $P_{e_i}^{\sigma_i}$ which originally lies in the interior of $\sigma_i$ will now lie on the exterior of $\sigma_{i+1}$. Therefore, the vertex in $\overline{G}$ that corresponds to $f_1$ will lie outside $\overline{T}$ while the vertex that corresponds to $f_2$ will lie inside $\overline{T}$. Thus, $\overline{e}_{k+1}$ will connect a vertex on $\overline{T}$ to a vertex that is outside of $\overline{T}$, thus expanding the tree. Hence $\bigcup\limits_{i=0}^{k+1} \overline{e}_i$ is also a tree and therefore $P_{k+1}$ is true. Thus, $P_k \Rightarrow P_{k+1}$.
\end{itemize}

By mathematical induction, $\bigcup\limits_{i=0}^n \overline{e}_i$ is a tree.
\end{proof}

From Lemma \ref{lem2}, we know that faces $f_k$, bounded by $e_k$ and $P_{e_k}^{\sigma_k}$, will be on the exterior of $\sigma$ if $e_k$ was used in the algorithm. Conversely, if $e_k$ was not used in the algorithm, then $f_k$ will be in the interior of the cycle $\sigma$. This is equivalent to stating that all the vertices in $\overline{T}$ lies outside $\sigma$ while all the vertices not in $\overline{T}$ lies in the interior or $\sigma$.

\begin{lem}
\label{lem4}
If $\overline{v} \in \overline{G}$ lies on $\overline{T}$, then all the vertices in $G$ on $f_{\overline{v}}$ lies on $\sigma_n$.
\end{lem}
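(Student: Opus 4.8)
The plan is to argue by induction on the iteration number, closely mirroring the structure of the proofs of Lemma~\ref{lem2} and Lemma~\ref{lem3}. Let $Q_i$ be the statement: for every $\overline{v}$ lying on the tree $\overline{T}_i := \bigcup_{j=0}^{i}\overline{e}_j$, all vertices of $G$ on the corresponding face $f_{\overline{v}}$ lie on $\sigma_{i+1}$. The base case $Q_0$: the tree $\overline{T}_0$ consists of the two endpoints of $\overline{e}_0$, corresponding to the two faces sharing the edge $e_0$. One of these is the outer face (whose vertices are on $\sigma_0\subseteq$ the union of cycles, but more to the point on $\sigma_1$ after the expansion), and the other is the face $f_0$ bounded by $e_0$ and $P_{e_0}^{\sigma_0}$; by construction $\sigma_1 = \sigma_0 + P_{e_0}^{\sigma_0} - e_0$ contains all vertices of $P_{e_0}^{\sigma_0}$ and all of $\sigma_0$ except possibly none (no vertices are deleted, only the edge $e_0$), so the boundary vertices of both faces lie on $\sigma_1$.

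For the inductive step, assume $Q_k$ holds. Passing from $\sigma_{k+1}$ to $\sigma_{k+2}$ we replace $e_{k+1}$ by $P_{e_{k+1}}^{\sigma_{k+1}}$; this operation only \emph{adds} vertices and edges to the cycle and deletes a single edge (never a vertex), so every vertex that was on $\sigma_{k+1}$ remains on $\sigma_{k+2}$. Hence all faces $f_{\overline{v}}$ with $\overline{v}\in\overline{T}_k$ still have all their boundary vertices on $\sigma_{k+2}$. It remains to handle the one new tree vertex introduced by $\overline{e}_{k+1}$: by the proof of Lemma~\ref{lem3}, $\overline{e}_{k+1}$ joins a vertex already on $\overline{T}_k$ to the vertex $\overline{v}'$ corresponding to the face $f_2$ that lies outside $\sigma_{k+1}$ — equivalently, to $f_{k+1}$, the face bounded by $e_{k+1}$ and $P_{e_{k+1}}^{\sigma_{k+1}}$. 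The boundary of $f_{k+1}$ consists exactly of the edge $e_{k+1}$ (its two endpoints $v_1,v_2$) together with the path $P_{e_{k+1}}^{\sigma_{k+1}}$. After the expansion, $\sigma_{k+2}$ contains $P_{e_{k+1}}^{\sigma_{k+1}}$ in its entirety, and $v_1,v_2$ were already on $\sigma_{k+1}$, hence on $\sigma_{k+2}$; so every vertex of $G$ on $f_{\overline{v}'}=f_{k+1}$ lies on $\sigma_{k+2}$, establishing $Q_{k+1}$.

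Finally, taking $i=n$ (or $i = n-1$ with the indexing adjusted so that $\sigma_n$ is the final cycle) in $Q_i$ gives exactly the statement of the lemma: if $\overline{v}\in\overline{T}=\overline{T}_n$ then all vertices of $G$ on $f_{\overline{v}}$ lie on $\sigma_n$. The main obstacle I anticipate is purely bookkeeping: the paper's indexing is slightly loose about whether the tree after step $i$ pairs with $\sigma_i$ or $\sigma_{i+1}$, and about which face ($f_1$ interior or $f_2$ exterior) the newly attached dual-tree vertex corresponds to — one must be careful to confirm, using the proof of Lemma~\ref{lem3}, that the tree always grows into the face $f_{k+1}$ that gets pushed to the exterior, i.e.\ precisely the face whose full boundary (including the freshly added complementary path) lies on the updated cycle. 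There is no substantive geometric difficulty beyond the observations already contained in Lemmas~\ref{lem2} and~\ref{lem3}; the key structural fact being exploited is that an expansion step never removes a vertex from the current cycle, only an edge, so the set of on-cycle vertices is monotincreasing.
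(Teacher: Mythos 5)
Your proposal is correct and uses essentially the same two observations as the paper's proof: a tree vertex's face is bounded by $e$ and $P_e^\sigma$, so all its vertices are placed on the cycle at the step where that face is expanded into, and they remain there because each iteration deletes only an edge, never a vertex. You merely package this as an explicit induction (and handle the root/outer face separately), which is more careful bookkeeping than the paper's terse version but not a different argument.
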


\begin{proof}
If $\overline{v}$ lies on the tree, then $\exists P_e^\sigma$ s.t. all its vertices in $G$ on $f_{\overline{v}}$ lies on $P_e^\sigma$ since the face is bounded by $P_e^\sigma$ and $e$. Since each iteration of the algorithm will only remove an edge and not any vertices, all the vertices already on the cycle will remain on the cycle at each iteration.
\end{proof}


\begin{thm}
\label{thm1}
$G$ has a Hamiltonian cycle if and only if $\exists \overline{T}$ found by Algorithm \ref{alg1}, with $\overline{v}^*$ at the root, that satisfy the following properties:

\begin{enumerate}
\item For each vertex $v \in G$ that lies on the boundaries of $f_{\overline{v}_1},f_{\overline{v}_2},f_{\overline{v}_3}$, at least one of the vertices, $\overline{v}_1,\overline{v}_2,\overline{v}_3 \in \overline{G}$, lies on $\overline{T}$.
\item No two vertices on $\overline{T}$ can be joined by an edge $\overline{e}$ unless $\overline{e} \in \overline{T}$.
\end{enumerate}
\end{thm}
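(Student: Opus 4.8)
The plan is to prove the two directions of the biconditional separately, exploiting the structural dictionary between the cycle $\sigma$ and the tree $\overline{T}$ that has already been built up in Lemmas \ref{lem2}--\ref{lem4}. The key observation to keep in mind throughout is the correspondence established in the discussion after Lemma \ref{lem3}: a vertex $\overline{v}\in\overline{G}$ lies on $\overline{T}$ if and only if $f_{\overline{v}}$ lies in the exterior of the output cycle $\sigma_n$, and $\overline{v}\notin\overline{T}$ iff $f_{\overline{v}}$ lies in the interior of $\sigma_n$. Since $G$ is cubic, every vertex $v$ lies on exactly three faces, and the three dual edges of the three edges incident to $v$ form a triangle in $\overline{G}$ on the vertices $\overline{v}_1,\overline{v}_2,\overline{v}_3$; this triangle is exactly the object referenced in both properties.

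For the forward direction, I would assume $G$ has a Hamiltonian cycle $C$. By Lemma \ref{lem1}, Algorithm \ref{alg1} can choose complementary paths so that it outputs $\sigma_n=C$, and by Lemma \ref{lem3} the corresponding dual edges form a tree $\overline{T}$; rooting it at $\overline{v}^*$ is legitimate because $\sigma_0$ is the outer facial cycle so $\overline{e}_0$ is incident to $\overline{v}^*$. I then verify the two properties. For Property 1, take any $v\in G$ on faces $f_{\overline{v}_1},f_{\overline{v}_2},f_{\overline{v}_3}$; since $C$ is Hamiltonian, $v\in\sigma_n$, and the two edges of $C$ at $v$ plus the one non-$C$ edge at $v$ partition the three faces into one that is ``cut off'' (exterior) and two that remain interior — more carefully, a vertex on the cycle cannot have all three incident faces interior, so at least one $f_{\overline{v}_j}$ is exterior, i.e. at least one $\overline{v}_j\in\overline{T}$. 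For Property 2, suppose $\overline{v},\overline{v}'\in\overline{T}$ are joined by a dual edge $\overline{e}\notin\overline{T}$, corresponding to a primal edge $e$ shared by the exterior faces $f_{\overline{v}},f_{\overline{v}'}$; since $e$ was never used by the algorithm, $e\in\sigma_n=C$, but an edge of $C$ has one interior and one exterior face — contradiction with both faces being exterior.

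For the reverse direction, I would take the tree $\overline{T}$ found by Algorithm \ref{alg1} satisfying Properties 1 and 2, and the associated output cycle $\sigma_n$, and show $\sigma_n$ is Hamiltonian, i.e. every $v\in G$ lies on $\sigma_n$. Given $v$ on faces $f_{\overline{v}_1},f_{\overline{v}_2},f_{\overline{v}_3}$: by Property 1 some $\overline{v}_j\in\overline{T}$, so by Lemma \ref{lem4} all vertices of $f_{\overline{v}_j}$ — in particular $v$ — lie on $\sigma_n$. Hence $\sigma_n$ passes through every vertex, and since it is by construction a cycle, it is a Hamiltonian cycle. Property 2 is not needed for Hamiltonicity per se but is a genuine necessary condition, so it must be retained in the statement; I would remark that it is what guarantees the tree is ``tight'', ruling out trees that are not actually realizable as the dual trace of a legal run of the algorithm (equivalently, it encodes that each $e\notin\sigma_n$ has its complementary face interior).

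The main obstacle I expect is making the local argument at a vertex $v$ fully rigorous in both directions — precisely, justifying that for $v$ on the cycle exactly one of its three incident faces is exterior and the other two interior, and that for $v$ not on the cycle all three are interior. This requires a careful look at the rotation system at $v$: the cycle $\sigma$ enters and leaves $v$ along two of the three incident edges, and the third edge, together with the portions of the two cycle-edges, bounds the faces; one must argue using planarity and Lemma \ref{lem2} (no vertices exterior to $\sigma$) that the face ``between'' the two cycle-edges on the outer side is the unique exterior one. Getting this wedge/rotation argument right, and checking it degrades gracefully when some of the $\overline{v}_j$ coincide or when $v$ itself is an internal vertex of a complementary path, is the delicate part; the rest is bookkeeping with the $\overline{T}\leftrightarrow$ exterior-face dictionary.
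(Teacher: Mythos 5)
Your proposal is correct and follows essentially the same route as the paper: both directions rest on the dictionary between vertices of $\overline{T}$ and faces exterior to $\sigma_n$, with Lemma \ref{lem1} supplying realizability, Lemma \ref{lem3} the tree structure, and Lemma \ref{lem4} together with Property 1 giving the reverse implication. Your local arguments (a vertex of the cycle must have an exterior incident face; an edge of $\sigma_n$ separates an interior face from an exterior one) are sharper restatements of the paper's ``negation of Lemma \ref{lem4}'' and ``two vertices appear twice'' steps, and your remark that Property 2 is not needed for the $(\Leftarrow)$ direction is an accurate reading that the paper's own proof obscures.
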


\begin{proof}
By first taking the outer facial cycle, $\sigma_0$, only $\overline{v}^*$ lies outside our base cycle. Hence, $\overline{T}$ starts with $\overline{v}^*$ as its root.

($\Rightarrow$) We will begin by proving each of the following property:
\begin{enumerate}
\item If $G$ has a Hamiltonian cycle, then $\exists$ a cycle in $G$ that uses every vertex in $G$. If none of the vertices, $\overline{v}_1,\overline{v}_2,\overline{v}_3$, lies on $\overline{T}$, then none of the $P_e^\sigma$ found by Algorithm \ref{alg1} could contain $v$ from negation of Lemma \ref{lem4}. This contradicts the definition of a Hamiltonian cycle. Therefore, at least one of the vertices lies on $\overline{T}$.
\begin{figure}[H]
\centering
\includegraphics{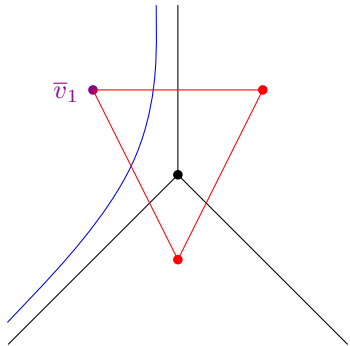}
\caption{A vertex lies on the cycle (blue) as $\overline{v}_1 \in \overline{T}$ (purple)}
\end{figure}
\item If 2 vertices, $\overline{v}'_1, \overline{v}'_2 \in \overline{T}$ are adjacent, $\exists$ 2 vertices shared by $f_{\overline{v}'_1},f_{\overline{v}'_2} \in G$. However,  the edge between them is not used on $\overline{T}$. This implies that the 2 vertices will lie on the corresponding path in $G$ twice, once from each face $f_{\overline{v}'_1},f_{\overline{v}'_2} \in G$. This is a contradiction from the definition of a Hamiltonian cycle.
\begin{figure}[H]
\centering
\begin{subfigure}{.5\textwidth}
  \centering
  \includegraphics[width=.4\linewidth]{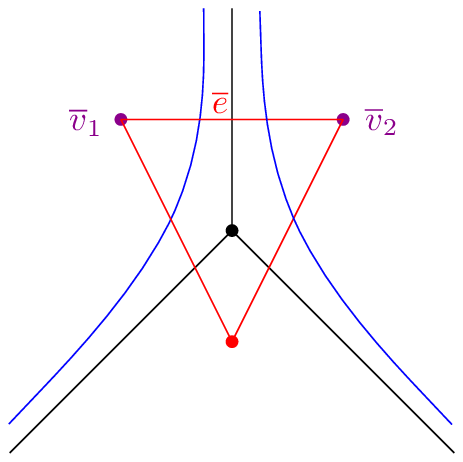}
  \caption{$\overline{v}_1,\overline{v}_2 \in \overline{T}$ but $\overline{e} \notin \overline{T}$}
\end{subfigure}
\begin{subfigure}{.5\textwidth}
  \centering
  \includegraphics[width=.4\linewidth]{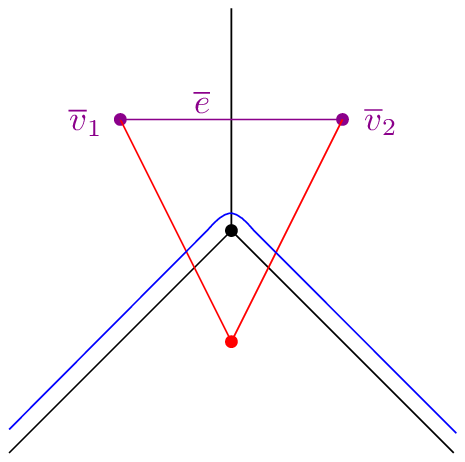}
  \caption{$\overline{v}_1,\overline{v}_2 \in \overline{T}$ and $\overline{e} \in \overline{T}$}
\end{subfigure}
\end{figure}
\end{enumerate}

($\Leftarrow$) Suppose that $\overline{T}=\bigcup\limits_{i=0}^n \overline{e}_i$ is a tree found by Algorithm \ref{alg1} that $\overline{T}$ satisfies those properties. From 2, we apply restriction on $\overline{T}$ such that the path in $G$ passes through each vertex at most once (as proven previously). From 1, we know the path passes through every vertex in $G$ at least once.

Since there is a path in $G$ that passes through every vertex at most once and at least once, we know the path passes through every vertex in $G$ once. Furthermore, we know that Algorithm \ref{alg1} will always output a cycle, $\sigma$. Therefore, we know that $\overline{T}$ satisfying those properties will correspond to a Hamiltonian cycle in $\overline{G}$
\end{proof}

Note that Theorem \ref{thm1} is a corollary of a theorem by Skupie{\'n} \cite{skupien2002hamiltonicity}.

\section{Finding $\overline{T}$}
We start solving the problem by modifying a backtracking algorithm (\textbf{procedure} \texttt{solve}) with restrictions (\textbf{procedure} \texttt{update}) to limit the amount of guessing required. Restrictions from \textbf{procedure} \texttt{update} may cause the tree to form 2 disjoint graphs, in which case a new backtracking algorithm (\textbf{procedure} \texttt{disjoint}) is called to trace a path between the graphs. If a particular choice of paths does not work, the algorithm will negate that choice (\textbf{procedure} \texttt{backtrack}). The algorithm will terminate if it successfully find $\overline{T}$ (all verties assigned) or if all other choices are exhausted, concluding that the graph has no Hamiltonian cycle.

\subsection{Search Algorithm}
\begin{alg}
\label{alg2}
To find $\overline{T}$ with properties in Theorem \ref{thm1}, we use a modified backtracking algorithm.
\end{alg}

\begin{algorithm}[H]
{\bf procedure} update($\overline{G},\overline{v}$): \\
\If{$\exists$ an unassigned vertex, $\overline{v}_1$, adjacent to both $\overline{v}$ and any other vertex, $\overline{u} \in \overline{T}$, s.t. $\exists$ a path between $\overline{u}$ and $\overline{v}$ \tcp*{See comments (*)}}{
$\overline{S}:=\overline{S}+\overline{v}_1$}
\tcc{Satisfies Property 2 of Theorem \ref{thm1}}
\If{$\exists$ 2 \emph{adjacent} vertices, $\overline{v}_1, \overline{v}_2 \in \overline{S}$\;}{
$V(\overline{T}):=V(\overline{T})+\overline{v}_3$, where $\overline{v}_3$ is all the vertices adjacent to both $\overline{v}_1$ and $\overline{v}_2$\;}
\tcc{Satisfies Property 1 of Theorem \ref{thm1}}
\eIf{$\exists$ a vertex $\in \overline{S} \cap \overline{T}$}{
return false \tcp*{A vertex cannot lie in both $\overline{S}$ and $\overline{T}$}}{
return true}
\end{algorithm}

\begin{algorithm}[H]
{\bf procedure} backtrack($\overline{G},\overline{v}$): \\
$V(\overline{T}):= V(\overline{T}) - \overline{v}$\;
$\overline{S}:=\overline{S}+\overline{v}$\;
\tcc{If $\exists$ a contradiction, negate the assumption made about $\overline{v}$}
\end{algorithm}

\begin{algorithm}[H]
{\bf procedure} solve($\overline{G}$): \\
\eIf{all vertices $\in \overline{G}$ assigned}{
return true}{
\For{each unassigned vertex, $\overline{v}$, adjacent to any vertex $\overline{w} \in \overline{T}$}{
$\overline{T}:=\overline{T}+(\overline{v},\overline{w})$ \tcp*{guess $\overline{v} \in \overline{T}$}
\eIf{update$(\overline{G},\overline{v})=$ false}{
return false \tcp*{Choice leads to contradiction}}{
\eIf{$\exists$ a disconnected vertex, $\overline{u} \in \overline{T}$}{
\If{disjoint$(\overline{G},\overline{u})$ return false}{
call backtrack($\overline{G},\overline{v}$)}}
{\eIf{solve($\overline{G}$) return true}{
output $\overline{T}$ \tcp*{Succeeded in finding $\overline{T}$}}{
call backtrack($\overline{G},\overline{v}$)}}}}}
\end{algorithm}
\newpage
\begin{algorithm}
\FloatBarrier
{\bf procedure} disjoint($\overline{G},\overline{v}$): \\
\eIf{all vertices $\in \overline{G}$ assigned}{
return true}{
\eIf{$\nexists$ a path between $\overline{v}$ and $\overline{v}^*$}{
\For{each unassigned vertex, $\overline{u}$, adjacent to $\overline{w} \in \overline{T}$, s.t. $\exists$ a path between $\overline{w}$ and $\overline{v}$}{
$\overline{T}:=\overline{T}+(\overline{u},\overline{w})$ \tcp*{guess $\overline{u} \in \overline{T}$}
\eIf{update$(\overline{G},\overline{u})=$ false}{
return false}{
\If{disjoint$(\overline{G},\overline{u})$ returns false\;}{
call backtrack($\overline{G},\overline{u}$)}}}}{
\eIf{solve($\overline{G},\overline{v}$) returns true\;}{
output $\overline{T}$}{
call backtrack($\overline{G},\overline{v}$)}}}
\end{algorithm}

\begin{algorithm}
\FloatBarrier
{\bf Begin} \\
$\overline{S} := \emptyset$\;
$\overline{T} := \left\{\overline{v}^*\right\}$\;
\tcp{Define set $\overline{S}$ as vertices that cannot lie on $\overline{T}$}
call solve($\overline{G}$)
\end{algorithm}

\noindent * Restrictions on vertices does not apply in disjoint graphs where the 2 vertices $\in \overline{T}$ belongs to different sets due to the necessity of connecting the disjoint graphs into one tree.
\newpage
\subsection{Complexity}
In the worst case scenario, we expect to force at least one vertex into $\overline{S}$ at each guess. This reduces the number of faces in which we have to check to $\frac{f}{2}$ where $f$ is the number of faces in $G$. Since we are working in planar graph, $e=\frac{3}{2}n$ where $n$ is the number of vertices. Using Euler's formula:

\begin{align*}
n-e+f&=2 \\
n-\frac{3}{2}n+f&=2 \\
f&=2+\frac{n}{2}
\end{align*}

Since we have 2 choices ($\overline{T}$ or $\overline{S}$) that we have to guess, in the worst case, our algorithm runs in $2^{\frac{f}{2}}$ or $2^{1+\frac{n}{4}}$. Hence, our algorithm has a time complexity of O$(2^n)$.

\section{Future Research}


We could modify Theorem \ref{thm1} such that it encompasses any planar graph, $G_2=(V_2,E_2)$ However, caution is advised dealing with faces that share a single vertex, but not a boundary, as this is not reflected in the dual graph, $\overline{G}_2=(\overline{V}_2,\overline{E}_2)$. To overcome this, we introduce the \emph{imaginary edge}.

\begin{define}
An \emph{imaginary edge}, $\overline{e}'$ between vertices in $\overline{G}_2$ if the corresponding faces in $G_2$ share a single vertex but not a boundary.
\end{define}

\begin{thm}
\label{thm2}
If $G_2$ has a Hamiltonian cycle then $\exists \overline{T}$ found by Algorithm \ref{alg1}, with $\overline{v}^*$ at the root, that at least satisfy the following properties:

\begin{enumerate}
\item For each n degree vertex $v \in G_2$ that lies on the boundaries of $f_{\overline{v}_1},f_{\overline{v}_2}, ..., f_{\overline{v}_n}$, at least one of the vertices, $\overline{v}_1,\overline{v}_2, ..., \overline{v}_n \in \overline{G}_2$, lies on $\overline{T}$
\item No two vertices on $\overline{T}$ can be joined by an edge $\overline{e}$ unless $\overline{e} \in \overline{T}$.
\item No two vertices joined by an imaginary edge can lie on $\overline{T}$
\end{enumerate}
\end{thm}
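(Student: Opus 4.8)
The plan is to prove Theorem \ref{thm2} as a direct generalization of the forward direction of Theorem \ref{thm1}, since Theorem \ref{thm2} only asserts the ($\Rightarrow$) implication. First I would observe that the entire machinery of Section~2 (the Expansion Algorithm, Lemma~\ref{lem2}, Corollary~\ref{cor1}, Lemma~\ref{lem1}) never used cubicity: the base cycle $\sigma_0$ is still the outer facial cycle, complementary paths are still unique by the face-boundary argument, and Lemma~\ref{lem1} still guarantees that if $G_2$ has a Hamiltonian cycle $C$, then some choice of complementary paths drives $\sigma$ to $C$. The only place cubicity entered Theorem~\ref{thm1} was Property~1, where each vertex lies on exactly three faces; for a general planar graph a vertex of degree $n$ lies on exactly $n$ faces, so Property~1 must be restated with $\overline{v}_1,\dots,\overline{v}_n$, which is exactly what the statement does. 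So Properties 1 and 2 should transfer almost verbatim.

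**The three properties.**

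For Property~1, I would reuse the argument from Theorem~\ref{thm1}: run Algorithm~\ref{alg1} along the Hamiltonian cycle $C$ (Lemma~\ref{lem1}), obtaining a tree $\overline{T}$ rooted at $\overline{v}^*$ by Lemma~\ref{lem3} (which again is cubicity-free). If a vertex $v$ of degree $n$ had \emph{none} of its $n$ incident dual vertices $\overline{v}_1,\dots,\overline{v}_n$ on $\overline{T}$, then by the contrapositive of Lemma~\ref{lem4} no complementary path used by the algorithm contains $v$, so $v \notin \sigma_{\text{final}} = C$, contradicting that $C$ is Hamiltonian. For Property~2, I would likewise copy the Theorem~\ref{thm1} argument: if adjacent $\overline{v}'_1,\overline{v}'_2 \in \overline{T}$ with the connecting dual edge $\overline{e} \notin \overline{T}$, then the shared boundary edge $e$ of $f_{\overline{v}'_1}, f_{\overline{v}'_2}$ was never removed, so both endpoints of $e$ appear on the complementary paths bounding both faces, hence appear twice on $C$ — contradiction. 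Neither argument touched the degree-3 hypothesis.

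**The genuinely new part.**

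Property~3 is the one piece that is actually new, and I expect it to be the main obstacle — or rather, the place where the proof must supply something Theorem~\ref{thm1} never needed. The issue is that in a non-cubic planar graph two faces can touch at a single vertex without sharing an edge, so the dual graph loses information; the imaginary edge $\overline{e}'$ records exactly this. I would argue: suppose $f_{\overline{v}_1}$ and $f_{\overline{v}_2}$ meet only at a vertex $v$, and suppose for contradiction both $\overline{v}_1, \overline{v}_2 \in \overline{T}$. By Lemma~\ref{lem4}, every vertex of $G_2$ on $f_{\overline{v}_1}$ lies on $\sigma_n = C$, and likewise every vertex on $f_{\overline{v}_2}$ lies on $C$; in particular $v$ lies on the complementary path bounding $f_{\overline{v}_1}$ and also on the complementary path bounding $f_{\overline{v}_2}$. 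Since these two faces share no edge, these are two distinct subpaths of $C$ through $v$, each contributing two edges of $C$ at $v$, forcing $v$ to have degree $\geq 4$ on $C$ — impossible for a cycle. Hence at most one of $\overline{v}_1,\overline{v}_2$ lies on $\overline{T}$, which is Property~3.

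**Remaining care.**

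The one subtlety I would flag explicitly is that the local picture at $v$ must be checked carefully: a degree-$n$ vertex has $n$ faces around it in cyclic order, and "share a single vertex but not a boundary" must mean the two faces are non-consecutive in that rotation, so that the two complementary paths through $v$ genuinely each use two edges at $v$ rather than sharing one. I would also note, as the paper already does for imaginary edges, that this is why the construction is phrased as a necessary condition only: the converse would require showing that such a $\overline{T}$ can always be realized by the algorithm, which faces exactly the ambiguity the imaginary edges are meant to patch but may not fully resolve, so I would not attempt the ($\Leftarrow$) direction here. Finally I would remark that Theorem~\ref{thm2} reduces to Theorem~\ref{thm1} when $G_2$ is cubic, since every vertex then has degree $3$ and no two faces can meet at a single vertex without sharing an edge, so Property~3 becomes vacuous.
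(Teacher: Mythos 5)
Your proposal is correct and follows essentially the same route as the paper: Properties 1 and 2 are inherited from Theorem \ref{thm1} (whose proof did not use cubicity), and Property 3 is obtained by the same contradiction as Property 2, namely that the single vertex shared by $f_{\overline{v}_1}$ and $f_{\overline{v}_2}$ would appear on the cycle twice. Your version is somewhat more careful than the paper's (making the ``degree $\geq 4$ on $C$'' contradiction explicit and noting the necessary-condition-only caveat), but the underlying argument is the same.
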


\begin{proof}
Properties 1 and 2 are proven in Theorem \ref{thm1}.

The proof of property 3 follows from the proof of Property 2:

If 2 vertices, $\overline{v}_1, \overline{v}_2 \in \overline{T}$ are joined by an imaginary edge, then the vertex in which $f_{\overline{v}_1}$ and $f_{\overline{v}_2}$ shares will be on the cycle twice. Since this contradicts the definition of a Hamiltonian cycle, property 3 holds.
\end{proof}

\section{Acknowledgments}
The author wishes to thank his supervisors, Dr Charl Ras and Hamid Mokhtar, for their continued guidance and providing useful insights to this problem.

This paper was supported by the Vacation Research Scholarship awarded by the Australian Mathematical Sciences Institute.

\end{document}